\documentclass[]{amsart}
\usepackage{amscd,amsthm,amssymb,amsfonts,amsmath,euscript}




\theoremstyle{plain}
\newtheorem{thm}{Theorem}
\newtheorem{lemma}{Lemma}[subsection]

\theoremstyle{definition}

\newtheorem{eg}[lemma]{Example}

\theoremstyle{remark}
\newtheorem{remark}[lemma]{Remark}

\newcommand{\nc}{\newcommand}


\def\makeop#1{\expandafter\def\csname#1\endcsname
  {\mathop{\rm #1}\nolimits}\ignorespaces}
\makeop{Hom}   \makeop{End}   \makeop{Aut}   \makeop{Isom}  \makeop{Pic} 
\makeop{Gal}   \makeop{ord}   \makeop{Char}  \makeop{Div}   \makeop{Lie} 
\makeop{PGL}   \makeop{Corr}  \makeop{PSL}   \makeop{sgn}   \makeop{Spf}
\makeop{Spec}  \makeop{Tr}    \makeop{Nr}    \makeop{Fr}    \makeop{disc}
\makeop{Proj}  \makeop{supp}  \makeop{ker}   \makeop{im}    \makeop{dom}
\makeop{coker} \makeop{Stab}  \makeop{SO}    \makeop{SL}    \makeop{SL}
\makeop{Cl}    \makeop{cond}  \makeop{Br}    \makeop{inv}   \makeop{rank}
\makeop{id}    \makeop{Fil}   \makeop{Frac}  \makeop{GL}    \makeop{SU}
\makeop{Nrd}   \makeop{Sp}    \makeop{Tr}    \makeop{Trd}   \makeop{diag}
\makeop{Res}   \makeop{ind}   \makeop{depth} \makeop{Tr}    \makeop{st}
\makeop{Ad}    \makeop{Int}   \makeop{tr}    \makeop{Sym}   \makeop{can}
\makeop{length}\makeop{SO}    \makeop{torsion} \makeop{GSp} \makeop{Ker}
\makeop{Adm}   \makeop{Mat}
\def\makebb#1{\expandafter\def
  \csname bb#1\endcsname{{\mathbb{#1}}}\ignorespaces}
\def\makebf#1{\expandafter\def\csname bf#1\endcsname{{\bf
      #1}}\ignorespaces} 
\def\makegr#1{\expandafter\def
  \csname gr#1\endcsname{{\mathfrak{#1}}}\ignorespaces}
\def\makescr#1{\expandafter\def
  \csname scr#1\endcsname{{\EuScript{#1}}}\ignorespaces}
\def\makecal#1{\expandafter\def\csname cal#1\endcsname{{\mathcal
      #1}}\ignorespaces} 

\def\doLetters#1{#1A #1B #1C #1D #1E #1F #1G #1H #1I #1J #1K #1L #1M
                 #1N #1O #1P #1Q #1R #1S #1T #1U #1V #1W #1X #1Y #1Z}
\def\doletters#1{#1a #1b #1c #1d #1e #1f #1g #1h #1i #1j #1k #1l #1m
                 #1n #1o #1p #1q #1r #1s #1t #1u #1v #1w #1x #1y #1z}
\doLetters\makebb   \doLetters\makecal  \doLetters\makebf
\doLetters\makescr 
\doletters\makebf   \doLetters\makegr   \doletters\makegr
     \def\qed{\qedmark\medbreak}%
\def\qedmark{{\enspace\vrule height 6pt width 5pt depth 1.5pt}}%

\normalsize

\makeop{Bl}

\def\Fp{{\bbF}_p}

\def\Qp{{\bbQ}_p}

\def\Qbar{\overline{\bbQ}}

\newcommand{\Z}{\mathbb Z}

\newcommand{\Q}{\mathbb Q}

\newcommand{\C}{\mathbb C}






\nc{\embed}{\hookrightarrow}




\newcommand{\ac}{algebraically closed }
\newcommand{\dieu}{Dieudonn\'{e}\,}

\nc{\ol}{\overline}
\nc{\wt}{\widetilde}
\nc{\opp}{\mathrm{opp}}

\makeop{Ram}
\makeop{Rep}


\begin{document}
\renewcommand{\thefootnote}{\fnsymbol{footnote}}
\setcounter{footnote}{-1}
\numberwithin{equation}{subsection}


\title{Abelian varieties without a prescribed Newton Polygon reduction}
\author{Jiangwei Xue and Chia-Fu Yu}
\address{
Institute of Mathematics, Academia Sinica and NCTS (Taipei Office)\\
6th Floor, Astronomy Mathematics Building \\
No. 1, Roosevelt Rd. Sec. 4 \\ 
Taipei, Taiwan, 10617} 
\email{chiafu@math.sinica.edu.tw}
\email{xue\_j@math.sinica.edu.tw}

\date{\today}
\subjclass[2000]{}
\keywords{} 

\begin{abstract}
  In this article we construct for each integer $g\ge 2$ an abelian
  variety $A$ of dimension $g$ defined over a number field for which
  there exists a symmetric integral slope sequence of length $2g$ that
  does not appear as the slope sequence of $\wt A$ for any good
  reduction $\wt A$ of $A$.
\end{abstract} 

\maketitle


\section{Introduction}
\label{sec:01}

Let $A$ be an abelian variety over a number field $F$. 
It is conjectured that $A$ always has a good ordinary reduction and 
furthermore that 
there is a finite field extension $L$ of $F$ and a density one set
$V(A,L)$ of non-archimedean places of $L$  
such that the base change $A\otimes_F L$ has (good) ordinary reduction
at every $v\in V(A,L)$ (cf. Bogomolov-Zarhin
\cite{bogomolov-zarhin:k3}).   

This conjecture is known to be true for elliptic curves 
(Serre~\cite{serre:abelian}), 
abelian surfaces (Ogus \cite{ogus:900}), 
and some abelian three-folds or four-folds 
(see Noot \cite{noot:compos1995, noot:crelle2000} and 
Tankeev~\cite{tankeev:1999}). 
In \cite{bogomolov-zarhin:k3} Bogomolov and Zarhin prove the analogous
theorem for K3 surfaces.   

Concerning non-ordinary reduction, Elkies \cite{elkies:invent1987}
shows that under a mild condition on the number field $F$, any
elliptic curve over $F$ has good supersingular reduction at infinitely
many primes of $F$. Inspired by work of Elkies and having no
counter-example, one may naturally ask whether any abelian variety
over a number field $F$ admits infinitely many supersingular
reductions. So far this is not known yet even for abelian surfaces
(except for some special cases like CM abelian surfaces).  In the
function field analogue, Poonen \cite{poonen:imrn1998} shows the
existence of a Drinfeld module of rank two which does not admit any
supersingular reduction.

Throughout this paper, $p$ and $\ell$ denote primes in $\Q$. For
abelian varieties of dimension $g$ in positive characteristic, the
attached $p$-divisible groups up to isogeny over an \ac field are
classified by their Newton Polygons, or equivalently, by the
associated slope sequences $\beta$ (the \dieu-Manin theorem, see
Manin~\cite{manin:thesis}). This invariant is a sequence of $2g$
rational numbers
$$ 0\le \lambda_1\le \dots \le \lambda_{2g}\le 1,$$ 
which satisfy the symmetric and integral conditions:
\begin{itemize}
\item [(i)] $\lambda_i+\lambda_{2g+1-i}=1$ for all $1\le i\le 2g$, and
\item [(ii)]  the
multiplicity of each $\lambda_i$ is a multiple of its denominator.
\end{itemize}  
An abelian variety defined over a field of characteristic $p>0$ is
said to be \textit{supersingular} if all $\lambda_i=1/2$; it is said
to be \textit{ordinary} if $\lambda_i$ is either $0$ or $1$ for all
$1\leq i\leq 2g$.  Then given an abelian variety $A$ over $F$ of
dimension $g$ and a symmetric integral slope sequence $\beta$ of
length $2g$, does $A$ always admit a good reduction whose slope
sequence coincides with $\beta$?  In this article we give a negative
answer to this general question.

We will restrict ourselves to the case where $A$ is
absolutely simple. Otherwise, one is reduced to study the simple
factors of $A$ (by extending the base field if necessary). For
example, if $A=E^g$, a $g$-fold product of an elliptic curve $E$, then
the reductions of $A$ are either ordinary or supersingular. In other
words, its reductions miss almost all the symmetric integral slope
sequences except the two ``extremal'' ones.

\begin{thm}\label{12}
  For any integer $g\ge 2$, there is a pair $(A/F, \beta)$
  consisting of
  \begin{itemize}
  \item an absolutely simple abelian variety $A$ of dimension $g$
    defined over a number field $F$,
  \item a symmetric integral slope sequence $\beta$ of length $2g$,
  \end{itemize}
  such that $\beta$ does not occur as the slope sequence of any good
  reduction of $A$.
\end{thm}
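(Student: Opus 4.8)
The plan is to take $A$ to be an absolutely simple abelian variety with complex multiplication, for which the Newton polygon of every good reduction is completely determined by the Shimura--Taniyama formula, and then to exhibit a symmetric integral slope sequence that this formula can never produce. First I would construct a CM field $K$ of degree $2g$ that is \emph{abelian} over $\Q$, together with a \emph{primitive} CM type $\Phi$. For existence one can take a cyclic CM subfield $K$ of a cyclotomic field $\Q(\zeta_m)$: choosing a prime $m\equiv 1\pmod{2g}$ with $v_2(m-1)=v_2(2g)$ produces a cyclic $K$ of degree $2g$ in which complex conjugation is the element of order $2$, so $K$ is CM, and on such a cyclic $K$ a ``window'' choice of embeddings gives a primitive CM type. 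Let $A$ be the CM abelian variety of type $(K,\Phi)$, defined over a number field $F$ which I enlarge until $A$ has good reduction at every finite place (CM abelian varieties have potential good reduction everywhere). Primitivity of $\Phi$ gives $\End^0(A)=K$ and keeps $A$ simple over $\overline{F}$, so $A$ is absolutely simple of dimension $g$.

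For the target I take $\beta=(\underbrace{0,\dots,0}_{g-1},\tfrac12,\tfrac12,\underbrace{1,\dots,1}_{g-1})$, a sequence of length $2g$ that is symmetric and integral (the slope $1/2$ occurs with multiplicity $2$, a multiple of its denominator, and $g\ge 2$ guarantees at least one slope $0$ and one slope $1$). The decisive feature is that $\beta$ has \emph{exactly two} slopes equal to $1/2$.

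The core of the argument is a multiplicity count. At a place $v\mid p$ of good reduction, the rational Dieudonn\'e module of $\wt A$ is free of rank one over $K\otimes\Q_p=\prod_{w\mid p}K_w$, so it splits as $\bigoplus_{w\mid p}M_w$ with each $M_w$ of height $[K_w:\Q_p]$ and, by Shimura--Taniyama, isoclinic of a single slope $\lambda_w$; the relation $\pi\overline\pi=q$ for the Frobenius $\pi\in K$ yields $\lambda_w+\lambda_{\overline w}=1$. Hence the multiplicity of the slope $1/2$ in the Newton polygon of $\wt A$ equals $\sum_{w:\ \lambda_w=1/2}[K_w:\Q_p]$. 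Now I invoke that $K/\Q$ is abelian: all decomposition groups at $p$ coincide, so every place $w\mid p$ has the \emph{same} local degree $[K_w:\Q_p]=ef$, and complex conjugation either fixes all places above $p$ or none of them. If it fixes all of them, every $\lambda_w=1/2$ and $\wt A$ is supersingular, so the multiplicity of $1/2$ is $2g>2$; if it fixes none, the places with $\lambda_w=1/2$ occur in conjugate pairs and contribute a multiple of $2ef\ge 4$; and if no place has slope $1/2$ the multiplicity is $0$. In every case it differs from $2$, so no good reduction of $A$ has slope sequence $\beta$.

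The step I expect to be the main obstacle is pinning down the Shimura--Taniyama description precisely enough to justify the two structural inputs used above: that each $w$-component $M_w$ is genuinely isoclinic of the stated height, and that abelianness of $K/\Q$ forces the splitting type of $p$ to be uniform \emph{even at ramified primes}, so that a single self-conjugate place of local degree $2$ producing an isolated slope-$1/2$ pair cannot coexist with split places. Once these are in place the counting is immediate. A smaller point to verify carefully is the existence of a primitive CM type on a cyclic (hence abelian) CM field of each even degree $2g$, which I would settle by the explicit window construction indicated above.
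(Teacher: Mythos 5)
Your proposal is correct, and although your construction of the pair $(A,\beta)$ coincides with the paper's --- the same cyclic CM field $K$ of degree $2g$ inside $\Q(\zeta_\ell)$ (your condition $v_2(\ell-1)=v_2(2g)$ is exactly the paper's congruence $\ell\equiv 1+2g \pmod{4g}$), the same window CM type, and a target sequence with values in $\{0,1/2,1\}$ --- your argument that $\beta$ is missed takes a genuinely different route. The paper leans on cyclicity: the decomposition group $D_p\subset\Z/2g\Z$ is determined by its order $f$; if $f$ is even then $D_p$ contains the unique element of order two, which is complex conjugation $c$, forcing the reduction to be supersingular, while if $f$ is odd every slope $|\Phi\cap(a+D_p)|/f$ has odd denominator, so $1/2$ cannot occur at all. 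This yields the count $M_g\le 1+(\text{number of positive odd divisors of }2g)\le g<g+1\le N_g$ and in fact excludes all $g-1$ intermediate $\{0,1/2,1\}$-valued sequences. You instead count the multiplicity of the slope $1/2$, using only that $K/\Q$ is Galois with $c$ central: the local degrees $[K_w:\Q_p]$ are then all equal, $c$ fixes either all or none of the places above $p$, the pairing $\lambda_w+\lambda_{\bar w}=1$ holds, and a slope-$1/2$ component forces its height $[K_w:\Q_p]$ to be even; hence the multiplicity of $1/2$ in any reduction is $0$, at least $4$, or $2g$, and never exactly $2$. What your route buys is generality: the count applies verbatim to any abelian variety with CM by a Galois CM field of degree $2g$, so it proves the referee's stronger Theorem~\ref{thm:main-res-improved} uniformly with the single sequence $(0,\dots,0,1/2,1/2,1,\dots,1)$, whereas the paper's proof of that theorem needs a different $\beta$ (with slope $1/f_0$, $\gcd(f_0,2g)=1$) when $g\ge 4$ and separate case analysis for $g=2,3$. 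What it gives up is the sharper information available in the cyclic case, namely the bound $M_g\le g$ and the fact that \emph{every} intermediate $\{0,1/2,1\}$-sequence is excluded, not just the one with two halves. One point to make explicit in a final write-up: in your conjugate-pair case the inequality $2ef\ge 4$ presupposes $ef\ge 2$, which holds because $\lambda_w=|\Phi_w|/(ef)=1/2$ forces $ef=2|\Phi_w|$ to be even, so an isolated pair of halves coming from a split place of local degree one is impossible.
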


In our construction the number field $F$ depends on the dimension $g$.
However, we have the following theorem.

\begin{thm}\label{11}
  In Theorem~\ref{12}, there are infinitely many $g$ for which the
  number field $F$ can be chosen to be $\Q$.
\end{thm}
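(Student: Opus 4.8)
The plan is to realize the abelian varieties of Theorem~\ref{12} as Jacobians of explicit curves over $\Q$ and to read off the obstruction from the Shimura--Taniyama formula. For each odd prime $n$ put $g=(n-1)/2$; since there are infinitely many primes, this produces infinitely many values of $g$. I would take the hyperelliptic curve $C_n\colon y^2=x^n+1$ over $\Q$, of genus $g$, with the automorphism $(x,y)\mapsto(\zeta_n x,y)$. Its Jacobian $A=\mathrm{Jac}(C_n)$ is defined over $\Q$ and acquires CM by $K=\Q(\zeta_n)$ over $\ol\Q$; computing on the differentials $x^{j-1}\,dx/y$ shows that the CM type is the consecutive type, which (up to the usual conjugation) we may take to be $\Phi=\{1,\dots,g\}\subset(\Z/n)^\times$. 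Because $n$ is prime, $K/\Q$ is cyclic of degree $2g$ with Galois group $(\Z/n)^\times$, and I would target the symmetric integral slope sequence
\[
\beta=\bigl\{\,0,\ (1/2)^{(2g-2)},\ 1\,\bigr\},
\]
a legitimate slope sequence for every $g\ge2$.

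Next I would compute, via the Shimura--Taniyama formula, every Newton polygon that can occur for a good reduction $\wt A$. At a prime $\ell\nmid 2n$ the variety $A$ has good reduction, $\ell$ is unramified in $K$, and the reduction is governed by $(K,\Phi)$ together with the splitting of $\ell$. The places $w\mid\ell$ correspond to the cosets of $H_f:=\langle\ell\rangle$, each of size $f=\ord_n(\ell)$, and the formula gives the slope $|\Phi\cap C|/f$ on the summand attached to a coset $C$. Since $(\Z/n)^\times$ is cyclic there is a unique subgroup of each order $f\mid 2g$, so the resulting polygon $\mathrm{NP}_f$ depends only on $f$; moreover every divisor $f\mid 2g$ is realised, since by Dirichlet each residue class of order $f$ contains a prime. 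When $f$ is even, $H_f$ contains the order-$2$ element $-1$, so each coset satisfies $-C=C$ and meets $\Phi$ in exactly $f/2$ elements; hence $\mathrm{NP}_f$ is supersingular for all even $f$. Thus only the odd divisors $f\mid 2g$ can contribute a non-supersingular polygon.

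The obstruction then follows from the primitivity of $\Phi$, which I would isolate as the key lemma: \emph{for every prime $n\ge5$ the only $a\in(\Z/n)^\times$ with $a\Phi=\Phi$ is $a=1$}. Granting this, $\Phi$ is a primitive CM type, so $A$ is absolutely simple, giving the absolute simplicity required by Theorem~\ref{12}. For the non-occurrence, suppose $\mathrm{NP}_f=\beta$ for some odd $f>1$. As $\beta$ has only the slopes $0,\tfrac12,1$ and $f$ is odd, each coset $C$ must satisfy $|\Phi\cap C|\in\{0,f\}$, i.e.\ $\Phi$ is a union of $H_f$-cosets; but then $H_f\subseteq\Stab(\Phi)=\{1\}$, forcing $f=1$, a contradiction. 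The polygon $\mathrm{NP}_1$ is ordinary and the even $f$ give supersingular, neither equal to $\beta$ for $g\ge2$; the finitely many ramified or bad primes are checked directly. Hence $\beta$ occurs for no good reduction of $A$, and $F=\Q$ works for these infinitely many $g$.

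The main obstacle is the primitivity lemma. Its failure would produce a nontrivial subgroup $H=\langle a\rangle$ with $H\subseteq\Phi=\{1,\dots,(n-1)/2\}$ (take the coset of $1$), so the crux is the purely multiplicative statement that \emph{no nontrivial subgroup of $(\Z/n)^\times$ lies entirely in the lower half $(0,n/2)$}. Such an $H$ must have odd order, since it cannot contain $-1$, and writing $a\in\Phi$ iff $\mathrm{Im}(\zeta_n^a)>0$ recasts the hypothesis as: multiplication by each $h\in H$ preserves the sign of $\sin(2\pi a/n)$ for all $a$. I would rule this out by an averaging argument over $H$ combined with the extremal behaviour near $a=g$; already the order-$3$ case reduces to $g^2+g+1\equiv0\pmod n$, which forces $n=3$, and the general odd order is handled in the same spirit. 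This single lemma yields both absolute simplicity and the non-occurrence of $\beta$, so it is the point on which the whole argument turns.
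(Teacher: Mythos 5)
Your construction is the paper's own: you have rediscovered Honda's examples, which are exactly what the paper uses. The paper takes the curves $y^2=1-x^\ell$ for odd primes $\ell$ (your $y^2=x^n+1$ is the same curve after $x\mapsto -x$), their Jacobians with CM by $\Q(\zeta_\ell)$ of type $\Phi=\{1,\dots,g\}$, and runs exactly your slope analysis: by Shimura--Taniyama the reduction at a good prime $p$ depends only on $f=\ord_\ell(p)$, even $f$ forces supersingularity because $-1\in\langle p\rangle$, and odd $f$ produces slopes $m/f$ with odd denominator. Note that this last point already finishes the non-occurrence of your $\beta$: when $f$ is odd no slope can equal $1/2$, so any $\{0,\tfrac12,1\}$-valued sequence other than the ordinary and supersingular ones is missed. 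Your detour through $\Stab(\Phi)=\{1\}$ at that step is correct but unnecessary; more importantly, it makes the non-occurrence appear to depend on primitivity, which it does not.

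The one genuine gap is the primitivity lemma itself, which \emph{is} load-bearing --- it is what gives absolute simplicity of $J$, as required by Theorem~\ref{12} --- but which you only sketch. The order-$3$ computation you give (reduction to $g^2+g+1\equiv 0\pmod n$) treats only the subgroup generated by $a=g$, not an arbitrary order-$3$ subgroup, and ``handled in the same spirit'' is not a proof. The lemma is true and has a short direct proof, so the gap is fixable: suppose $a\Phi=\Phi$ with $a\neq 1$; since $1\in\Phi$, the representative $\bar a$ of $a$ lies in $\{2,\dots,g\}$. Put $j=\lceil n/(2\bar a)\rceil$. Then $j\leq\lceil n/4\rceil\leq g$ for $n\geq 5$, so $j\in\Phi$, while $n/2\leq \bar a j< n/2+\bar a<n$, so the representative of $aj$ lies in the upper half and $aj\notin\Phi$, a contradiction. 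Hence no nontrivial element stabilizes $\Phi$, and in particular no nontrivial subgroup of $(\Z/n\Z)^\times$ lies in $\{1,\dots,g\}$. (The paper does not prove this either: it asserts primitivity and cites Honda's Theorems 1 and 2 for the properties of $J$.) With this lemma filled in, your argument is complete and coincides with the paper's proof.
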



The CM abelian varieties play an essential role in our construction.
For a CM abelian variety $A/F$ of type $(K, \Phi)$, the Newton polygon
of the reduction of $A$ over a ``good'' prime $\grq\mid p$ of $F$ can
be determined from the CM-type $\Phi$ by the Taniyama-Shimura formula.
This allows us to prove Theorem~\ref{12} by choosing a special type of
CM-abelian varieties. To obtain Theorem~\ref{11}, we study Honda's
examples \cite{honda:osaka1966}. They are the Jacobians of the smooth
projective curves defined by the affine equations $y^2=1-x^\ell$ for
all odd primes $\ell$. This gives a family of CM Jacobians whose
dimensions are of the form $(\ell-1)/2$.


  
\section{Reduction of CM abelian varieties}

First, we recall Tate's formulation of Shimura-Taniyama formula in
terms of $p$-divisible groups~\cite[Section 4]{tate:ht}, 
which describes the behavior of
reductions of CM abelian varieties. With this tool in hand, we then
show that when the CM-field is a cyclic extension of $\Q$, the types
of Newton polygons arising from the reductions are quite limited. This
leads to a proof of Theorem~\ref{12}. At the end, we present Honda's
examples \cite{honda:osaka1966} and gives a proof of Theorem~\ref{11}.

\subsection{Shimura-Taniyama theory for CM abelian varieties}
Let $K$ be a CM-field of degree $2g$ over $\Q$, and $\Sigma_K$ the set
of embeddings of $K$ into the algebraic closure $\Qbar\subset \C$ of
$\Q$ : \[\Sigma_K:=\Hom_\Q(K,\Qbar)=\Hom_{\Q}(K,\C).\] We fix an
embedding $\iota:\Qbar \hookrightarrow \Qbar_p$, where $\Qbar_p$ is a
fixed algebraic closure of $\Qp$. It induces a bijection:
\[ \iota: \Sigma_K \simeq \Sigma_{K,p}:=\Hom_{\Q_p}(K\otimes_{\Q}
\Q_p, \Qbar_p), \qquad \varphi\mapsto \iota\circ\varphi.\]
On the other hand, we have $K\otimes_\Q \Qp=\prod_{\grp | p} K_\grp$,
where $K_\grp$ denotes the completion of $K$ at the prime $\grp$ of
$K$. If we put $\Sigma_{K_\grp}:=\Hom_{\Qp}(K_\grp, \Qbar_p)$, then
\begin{equation}
  \label{eq:1}
 \Sigma_{K,p}=\coprod_{\grp}
\Sigma_{K_\grp}. 
\end{equation}
Let $\rho\in \Sigma_K$ be a fixed embedding of $K$ into $\Qbar$. When
$K/\Q$ is Galois with $G:=\Gal(K/\Q)$, we may identify $\Sigma_K$ (and
in turn $\Sigma_{K,p}$ via $\iota$) with $G$ via $\rho$:
\begin{equation}
  \label{eq:8}
G \simeq \Sigma_K, \quad \sigma \leftrightarrow \rho\circ \sigma,
\;\forall \sigma\in G.
\end{equation}
 The embedding $\iota\circ \rho :
K\hookrightarrow \Qbar_p$ induces a unique prime $\grp_0\mid p$ of
$K$. Let $D_{\grp_0}$ be the decomposition group of $\grp_0$. Then
(\ref{eq:1}) corresponds to the partition of $G$ into the disjoint
union of right cosets of $D_{\grp_0}$. More explicitly,
\begin{equation}
  \label{eq:2}
\Sigma_{K_\grp}\simeq D_{\grp_0}\sigma_\grp^{-1}, \qquad
\text{ with } \quad \sigma_\grp \grp_0=\grp. 
\end{equation}
In particular, $|\Sigma_{K_\grp}|=|D_{\grp_0}|$ for all $\grp \mid p$.
If $K$ is abelian over $\Q$, the decomposition group $D_{\grp_0}$
depends only on $p$ and not on $\grp_0$, so it is denoted by $D_p$
instead. In this case, the partition of $G$ into cosets of $D_p$ does
not depend on the choice of $\iota$ nor $\rho$.

Let $c\in \Aut(K)$ be the unique automorphism of order 2 that is
induced by the complex conjugation for any embedding $K\hookrightarrow
\C$. A subset $\Phi\subset \Sigma_K$ is said to be a CM-type on $K$ if
$\Sigma_K=\Phi \coprod \Phi c$.  Given a CM-type $\Phi$ on $K$, we
write $\Phi_\grp:=\iota(\Phi)\cap \Sigma_{K_\grp}\subset \Sigma_{K,p}$
for prime $\grp$ of $K$. Let $\bar{\grp}:=c\grp$, then $c$ induces an
isomorphism between the completions $K_\grp\simeq K_{\bar{\grp}}$,
which gives rise to a bijective map
\begin{equation}
  \label{eq:3}
  c: \Sigma_{K_{\bar{\grp}}} \to \Sigma_{K_\grp},  \qquad \varphi \mapsto
  \varphi \circ c.
\end{equation}
It follows from the definition of a CM-type that
\begin{equation}
  \label{eq:4}
  \Phi_\grp  \coprod \Phi_{\bar{\grp}}c=\Sigma_{K_\grp}.
\end{equation}
In particular, if $c\grp =\grp$, then
$|\Phi_\grp|=|\Sigma_{K_\grp}|/2$.

A complex abelian variety $A_\C$ of dimension $g$ is said to have
complex multiplication of type $(K,\Phi)$ if there is an embedding
$K\hookrightarrow \End^0(A_\C):=\End(A_\C)\otimes_\Z \Q$, and the
character of the representation of $K$ on the Lie algebra $\Lie_{\C}(A_\C)$ is given by $\sum_{\varphi\in \Phi} \varphi$.  A CM abelian
variety of type $(K, \Phi)$ is simple if and only if $\Phi$ is
\textit{primitive}, i.e., not induced from a CM-type of a proper
CM-subfield of $K$.

Let $A_\C$ be a CM complex abelian variety of type $(K, \Phi)$. Then
$A_\C$ has a model $A$ defined over a number field $F\subset \Qbar$
(\cite[Section 6.2 and 12.4]{shimura-taniyama}). Enlarging the base
field $F$ if necessary, we may assume that $A$ has a good reduction
$A\otimes \kappa(\grq)$ at every finite place $\grq$ of $F$
(cf. \cite{serre-tate}). Here $\kappa(\grq)$ denotes the residue field
of $\grq$. Since we are only concerned with the isogeny invariants,
replacing $A$ with its quotient by a suitable finite subgroup if
necessary, we may further assume that $\End(A)\cap K= O_K$, the ring
of integers of $K$. The $p$-adic completion of $O_K$ decomposes into a
product
\begin{equation}
  \label{eq:5}
  O_K\otimes_\Z \Z_p = \prod_{\grp \mid p } O_{K_\grp}, 
\end{equation}
where $O_{K_\grp}$ denotes the ring of integers in $K_\grp$.  Let
$\grq$ be the prime of $F$ corresponding to the embedding $\iota:
\Qbar\to \Qbar_p$, and
\begin{equation}
  \label{eq:6}
 A\otimes \ol{\kappa(\grq)}[p^\infty]=\prod_{\grp|p} H_\grp   
\end{equation}
be the decomposition of $p$-divisible groups induced from
(\ref{eq:5}).  Then each component $H_\grp$ is of height
$|\Sigma_{K_\grp}|$, dimension $|\Phi_\grp|$, and isoclinic of slope
$|\Phi_\grp|/|\Sigma_{K_\grp}|$ (see \cite[Chapter III, Theorem
1]{shimura-taniyama}, \cite[Section 5]{tate:ht} and \cite[Section
4]{yu:cm}). In particular, if $K$ is abelian over $\Q$, then the slope
sequence of $A\otimes \ol{\kappa(\grq)}$ depends only on $\Phi$ and
$p$. In other words, it is independent of the choice of $\iota:
\Qbar\to \Qbar_p$, and thus independent of the prime $\grq\mid p$ of
$F$ for the reduction.

\begin{eg}\label{eg:exist-ord-ss-reduction}
  Suppose that $p$ splits completely in $K$. Then $K_\grp=\Q_p$ for
  all $\grp \mid p$. So $|\Sigma_{K_\grp}|=1$, and $\Phi_\grp$ either
  coincides with $\Sigma_{K_\grp}$ or is empty. Therefore, each
  $H_\grp$ has slope either $0$ or $1$. The reduction of $A$ at any
  prime  $\grq\mid p$ is ordinary.

  Let $L$ be the Galois closure of $K$ over $\Q$, i.e., the compositum
  of all conjugates of $K/\Q$. It is again a CM-field with $c$ in the
  center of $\Gal(L/\Q)$. Let $p$ be a prime unramified for $L/\Q$
  such that the Artin symbol $(p, L/\Q)=c \in \Gal(L/\Q)$. By
  Tchebotarev density theorem (\cite[Theorem 10, Section
  VIII.4]{lang:ant}), such $p$ exists and they have a positive
  density. Any prime $\grp\mid p$ in $K$ is then fixed by $c$.  By the
  remark below (\ref{eq:4}), the reduction of $A$ at $\grq$ is
  supersingular.
\end{eg}

\subsection{Proof of theorem~\ref{12}}

\begin{lemma}
  For any integer $g\ge 1$, there is a CM field $K$ which is a cyclic
  extension over $\Q$ with Galois group $G\simeq \Z/{2g
    \Z}$. Moreover, $K$ admits a primitive CM-type $\Phi$.
\end{lemma}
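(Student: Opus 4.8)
The plan is to realize $K$ as a cyclic subfield of a cyclotomic field, locating a suitable conductor with Dirichlet's theorem, and then to write down an explicit primitive CM-type. For the cyclic degree-$2g$ field to be CM the complex conjugation $c$ must be the unique order-$2$ element $g\in\Z/2g\Z$; the point to watch is that a careless choice produces a totally real field instead. Since $\gcd(2g+1,4g)=1$, Dirichlet's theorem on primes in arithmetic progressions provides a prime $q\equiv 2g+1\pmod{4g}$. Because $\Gal(\Q(\zeta_q)/\Q)\cong(\Z/q\Z)^\times$ is cyclic of order $q-1$ and $2g\mid q-1$, there is a unique subfield $K\subset\Q(\zeta_q)$ with $[K:\Q]=2g$, and it is cyclic over $\Q$. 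Writing $H\le(\Z/q\Z)^\times$ for the index-$2g$ subgroup fixing $K$, one has $|H|=(q-1)/2g$, which is \emph{odd} by the choice of $q$. The unique element of order $2$ in the cyclic group $(\Z/q\Z)^\times$ is $-1$, corresponding to complex conjugation, and $-1\in H$ would force $|H|$ even; hence $-1\notin H$. Therefore $c$ restricts to a nontrivial central involution of $K/\Q$, so $K$ is a CM field and $c$ is the element $g$ of $\Z/2g\Z\cong\Gal(K/\Q)$.

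Next I would produce a primitive CM-type. Using the identification \eqref{eq:8} of $\Sigma_K$ with $G=\Z/2g\Z$, the map $\varphi\mapsto\varphi c$ becomes the translation $a\mapsto a+g$. I take $\Phi=\{0,1,\dots,g-1\}$. The $g$ pairs $\{a,a+g\}$ with $0\le a\le g-1$ partition $G$, and $\Phi$ contains exactly one element of each pair, so $\Sigma_K=\Phi\coprod\Phi c$ and $\Phi$ is a CM-type. A CM-type on a Galois CM field is primitive exactly when its translation stabilizer $S=\{\tau\in G:\Phi+\tau=\Phi\}$ is trivial (otherwise $\Phi$ is a union of cosets of $S$ and is induced from the CM-subfield $K^{S}$). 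If $\tau\in S$, then from $0\in\Phi$ we get $S=0+S\subseteq\Phi$; but a subgroup $S\le\Z/2g\Z$ of order $s\ge 2$ has largest element $2g-2g/s\ge g>g-1$ and so cannot lie inside $\{0,\dots,g-1\}$. Hence $S=\{0\}$ and $\Phi$ is primitive.

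The only genuinely delicate step is the totally imaginary condition on $K$: requiring merely $q\equiv 1\pmod{2g}$ yields a cyclic degree-$2g$ subfield that may well be totally real, and one must sharpen the congruence to $q\equiv 2g+1\pmod{4g}$ in order to make $(q-1)/2g$ odd and so exclude $-1$ from $H$. Once $c$ is correctly arranged to be the order-$2$ element $g$, the primitivity of the interval type $\{0,\dots,g-1\}$ reduces to the elementary size estimate above.
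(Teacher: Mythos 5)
Your proof is correct and follows essentially the same route as the paper: the same Dirichlet congruence $q\equiv 1+2g\pmod{4g}$ forcing $|H|=(q-1)/2g$ odd so that complex conjugation survives in the quotient, the same cyclic degree-$2g$ subfield of $\Q(\zeta_q)$, and the same interval CM-type $\Phi=\{0,1,\dots,g-1\}$. The only difference is that you spell out what the paper leaves as ``but this is not the case'': your stabilizer-subgroup size estimate ($2g-2g/s\ge g$) is a clean justification of the primitivity claim, and your identification of $c$ with the unique order-$2$ element $g$ makes the CM-type verification explicit.
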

\begin{proof}
  By the Dirichlet theorem on arithmetic progressions (cf.~Lang
  \cite[Section VIII.4]{lang:ant}), there is a prime number $\ell$
  such that $\ell\equiv 1+2g \pmod {4g}$. Then the integer
  $m:=(\ell-1)/2g$ is odd. Let $K$ be the fixed subfield of the
  $\ell$-th cyclotomic field $\Q(\zeta_\ell)$ for the unique subgroup
  $H\subset (\Z/\ell\Z)^\times$ of order $m$. Since $|H|$ is odd, the
  complex conjugation $c$ on $\Q(\zeta_\ell)$ is not contained in $H$,
  and hence it induces a non-trivial automorphism of $K$. Therefore,
  $K$ is a CM field which is cyclic over $\Q$ of degree $2g$.  We claim
that $\Phi=\{0, 1, \cdots, g-1\}\subseteq \Z/2g\Z=\Gal(K/\Q)$ is a
primitive CM-type on $K$. Otherwise, $\Phi$ will be translation
invariant under a nontrivial subgroup $\Gal(K/K')\subset \Z/2g\Z$ for
some proper CM-subfield $K'$ of $K$, but this is not the case. \qed
\end{proof}

Now for any $g\geq 2$, let $K\subset \Qbar$ be a CM-field cyclic over
$\Q$ with Galois group $G=\Z/2g\Z$. Choose a primitive CM type
$\Phi\subset \Sigma_K\simeq G$. The complex torus $\C^\Phi/\Phi(O_K)$
defines a complex abelian variety $A_\C$ of CM type $(K,\Phi)$. Let
$A$ be a model of $A_\C$ defined over a sufficiently large number
field $F\subset \C$ such that $A$ has good reduction everywhere.  Let
$\grq$ be a prime of $F$ over $p$, and $D_p$ be decomposition group of
$p$ in $K$. Since $\Gal(K/\Q)$ is cyclic, $D_p$ is uniquely determined
by its order $f:=|D_p|$. We claim that the slope sequence of the
reduction $A\otimes \kappa(\grq)$ is uniquely determined by
$f$. Indeed, the slope of each component $H_\grp$ in (\ref{eq:6}) is
of the form $\lambda:=|\Phi\cap (a+D_p)|/f$ for some coset $a+D_p$ of
$D_p$ in $G$.  If $f$ is even, the complex conjugation $c\in
\Gal(K/\Q)$ lies in $D_p$, hence every prime $\grp\mid p$ in $K$ is
fixed by $c$.  It follows that every $H_\grp$ in (\ref{eq:6}) is
isoclinic of slope $1/2$, and thus $A\otimes {\kappa(\grq)}$ is
supersingular. For a non-supersingular reduction of $A$, $f$ is
necessarily odd, so all the slopes $\lambda$ have odd denominators.

Let $M_g$ be the number of all possible slope sequences arising from
good reductions of $A$, and $N_g$ be the number of all symmetric
integral slope sequences of length $2g$. To prove Theorem~\ref{12}, it
is enough to show that $M_g<N_g$. By the previous arguments, we have
an upper bound
\begin{equation}
  \label{eq:7}
 M_g \le 1+ \text{ the number of positive odd divisors of
  $2g=|G|$}. 
\end{equation}
Therefore, $M_g\leq g$ for any $g\geq 2$.  On the other hand, for any
$g\geq 1$, one easily sees $N_g\ge g+1$ by counting the number of
symmetric integral slope sequences taking values only in $\{0, 1/2,
1\}$. This shows that $M_g< N_g$ for all $g\ge 2$ and
hence proves Theorem~\ref{12}.

In fact, let $\beta$ be a symmetric integral slope sequence of length
$2g$ that takes values only in $\{0, 1/2, 1\}$, and $\beta$ is neither
ordinary nor supersingular.  There are $g-1$ such slope sequences. We
have shown that $\beta$ never coincides with the slope sequence of any
good reduction of $A$.

\begin{remark}
  Suppose that $g=2^n$. By (\ref{eq:7}), $M_g\leq 2$, and hence it is
  $2$ by Example~\ref{eg:exist-ord-ss-reduction}. Varying $n$, we
  obtain an infinite family of absolutely simple abelian varieties
  whose reductions are either ordinary or supersingular. There are
  other class of abelian varieties that enjoy this property.  For
  example, let $D$ be an indefinite quaternion division algebra over
  $\Q$, and $A/F$ be an abelian surface over a number field $F$ with
  quaternion multiplication (QM) by $D$. In other words, there exists
  an embedding $D\hookrightarrow \End^0(A)$.  Then any reduction of
  $A$ is either ordinary or supersingular. Indeed, let $\wt A$ be an
  good reduction of $A$ over some prime $\grq\mid p$ of $F$. The
  quaternion algebra $D_p:=D\otimes_\Q \Q_p$ acts on $V_p{\wt
    A}:=T_p{\wt A}\otimes_{\Z_p}\Q_p$, where $T_p{\wt A}$ is the
  Tate-module of ${\wt A}$. If ${\wt A}$ has slope sequence $(0, 1/2,
  1/2, 1)$, then $V_p{\wt A}$ is a dimension one $\Q_p$-vectors space,
  which can not admit any action by $D_p$. It is known that an abelian
  surface $A$ with QM by $D$ is absolutely simple if and only if it
  does not have CM (cf. \cite{yu:qm}). If $A$ has no CM, then
  $\End^0(A)=D$, otherwise, $A$ is isogenous to the self-product of a
  CM elliptic curve. In particular, any good reduction $\wt A$ of $A$,
  which is an abelian surface with QM by $D$ over a finie field, 
  is always isogenous to the self-product of an elliptic curve.
  \end{remark}

We would like to thank the referee for the following stronger version
of Theorem~\ref{12}. 
\begin{thm}\label{thm:main-res-improved}
  Let $K$ be a CM-field with $K/\Q$ Galois and $[K:\Q]=2g$ with $g\geq
  2$. Let $A/F$ be an abelian variety with CM by $K$. There exists a
  symmetric integral slope sequence $\beta$ such that, for each prime
  $\grq$ of $F$, the slope sequence of reduction $A\otimes
  \kappa(\grq)$ does not coincide with $\beta$.
\end{thm}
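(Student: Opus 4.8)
The plan is to exhibit a single slope sequence $\beta$ that is barred, on purely combinatorial grounds, from ever being the Newton polygon of a reduction. First I would set $\beta$ to be the sequence of length $2g$ whose slopes are $0$ once, $1/2$ with multiplicity $2g-2$, and $1$ once, i.e. $\beta=(0,1/2,\dots,1/2,1)$. A quick check shows $\beta$ is symmetric ($\lambda_i+\lambda_{2g+1-i}=1$) and integral (the multiplicity $2g-2$ of $1/2$ is even, while $0$ and $1$ occur with multiplicity $1$, matching their denominator $1$); and for $g\ge 2$ it is neither ordinary nor supersingular, since it contains the slope $1/2$ as well as the slopes $0$ and $1$.

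The key structural input is the decomposition (\ref{eq:6}) together with the Galois hypothesis. For any prime $\grq\mid p$ of good reduction, write $f:=|D_p|$ for the order of the decomposition group at $p$; because $K/\Q$ is Galois, all the local degrees $|\Sigma_{K_\grp}|=[K_\grp:\Q_p]$ coincide with this common value $f$. Hence in (\ref{eq:6}) each factor $H_\grp$ is isoclinic of some slope $\lambda_\grp$ and of height $f$, so it contributes exactly $f$ equal entries to the slope sequence of $A\otimes\kappa(\grq)$. Consequently every slope value occurring in this sequence does so with multiplicity a positive multiple of $f$. This uniform-block phenomenon is exactly where the Galois assumption (equal local degrees at all primes over $p$) enters, and it is what replaces the parity-of-$f$ argument used in the cyclic case.

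I would then finish by contradiction. Suppose the slope sequence of $A\otimes\kappa(\grq)$ equalled $\beta$ for some good prime $\grq\mid p$. Since the slope $0$ occurs in $\beta$ with multiplicity $1$, the divisibility just established forces $f\mid 1$, i.e. $f=1$; but $f=1$ means $p$ splits completely in $K$, so $K_\grp=\Q_p$ and $|\Sigma_{K_\grp}|=1$ for every $\grp\mid p$, whence each $\lambda_\grp\in\{0,1\}$ and the reduction is ordinary (exactly as in Example~\ref{eg:exist-ord-ss-reduction}). This contradicts the presence of the slope $1/2$ in $\beta$. Therefore no prime $\grq$ of $F$ produces the slope sequence $\beta$, which is the assertion.

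I expect the only real obstacle to lie at the front end rather than in the combinatorial core: one must ensure the Shimura--Taniyama description genuinely applies at the relevant primes. Concretely, I would first replace $A$ by an $F$-isogenous abelian variety with $\End(A)\cap K=O_K$ (isogeny over $F$ preserves the slope sequence at each $\grq$ and does not change $F$), and then restrict attention to the primes $\grq$ of good reduction, where $A\otimes\kappa(\grq)$ is honestly an abelian variety and (\ref{eq:6}) is available with all factors of equal height $f$. Once this normalization is in place the argument above is immediate and robust, relying only on the existence of a slope of multiplicity $1$ distinct from $0$ together with some slope of even denominator in $\beta$.
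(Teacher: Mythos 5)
Your proof is correct, and its combinatorial core is genuinely different from the paper's. Both arguments start from the same structural input, namely that in the decomposition (\ref{eq:6}) each component $H_\grp$ is isoclinic of height equal to the common local degree $f=|D_p|$ (this is exactly where the Galois hypothesis enters in both cases). The paper converts this into a constraint on \emph{denominators}: every slope has the form $|\Phi_\grp|/f$ with $f$ the order of a subgroup of $\Gal(K/\Q)$, so for $g\geq 4$ one may pick $f_0$ with $1<f_0<g$ and $\gcd(f_0,2g)=1$ and exclude any $\beta$ containing the slope $1/f_0$; the cases $g=3$ and $g=2$ then need separate treatment (either $\Gal(K/\Q)$ is forced to be cyclic, reducing to the proof of Theorem~\ref{12}, or, in the biquadratic case, one uses that every CM-type is imprimitive, so $A$ is geometrically isogenous to a product of CM elliptic curves). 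You instead convert the same input into a constraint on \emph{multiplicities}: each slope value occurs a multiple of $f$ times, so your single sequence $\beta=(0,1/2,\dots,1/2,1)$, in which slope $0$ has multiplicity one, forces $f=1$; then every $H_\grp$ has slope $0$ or $1$, the reduction is ordinary, and the slope $1/2$ in $\beta$ gives the contradiction. This yields one explicit $\beta$ and a uniform argument for all $g\geq 2$, with no case analysis and no input on CM-types beyond the Shimura--Taniyama formula itself; what the paper's denominator argument buys in exchange is that, for $g\geq 4$, it excludes a whole family of sequences at once (any sequence containing a slope whose denominator exceeds $1$ and is prime to $2g$), rather than a single one. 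Your front-end normalizations (an $F$-isogeny arranging $\End(A)\cap K=O_K$, which by N\'eron--Ogg--Shafarevich changes neither the set of good-reduction primes nor the slope sequences there, and restriction to primes of good reduction) match the paper's setup and are handled correctly.
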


\begin{proof}
  Since $K$ is Galois over $\Q$, in the decomposition (\ref{eq:6}) of
  the $p$-divisible group of the reduction, each component $H_\grp$ is
  isoclinic of slope $|\Phi_\grp|/f$, where $f$ is the order of the
  decomposition group at $p$ in $K$. Suppose that $g\geq 4$, we pick
  $1<f_0<g$ such that $\gcd(f_0, 2g)=1$. For example, if $g$ is odd,
  we may choose $f_0=g-2$, and if $g$ is even, we choose
  $f_0=g-1$. Then $\Gal(K/\Q)$ has no subgroup of order $f_0$, and
  thus $f_0$ can never occur as the denominator of a slope of some
  $H_\grp$. Let $\lambda=1/f_0$, and $\beta$ be the following symmetric
  integral slope sequence:
\[\beta= (\underbrace{0, \cdots, 0}_{g-f_0} \quad
\underbrace{\lambda, \cdots, \lambda}_{f_0},\quad
\underbrace{1-\lambda, \cdots, 1-\lambda}_{f_0}, \quad \underbrace{1,
  \cdots, 1}_{g-f_0}). \] Then $\beta$ never occurs as the slope
sequence of any good reduction of $A$.

If $g=3$, then $\Gal(K/\Q)$ is a group of order $6$ with an element of
order $2$ in its center. Therefore, $\Gal(K/\Q)=\Z/6\Z$ and we are
reduced to the proof of Theorem~\ref{12}. If $g=2$, then $\Gal(K/\Q)$
is either $\Z/4\Z$ or $(\Z/2\Z)^2$.  In the first case, once again we
are reduced to the proof of Theorem~\ref{12}. In the second case $K$
contains two quadratic imaginary subfields, any CM-type on $K$ is
induced from one of them. Hence $A$ is isogenous over $\bar{F}$ to a
product of CM elliptic curves, so its reduction is either ordinary or
supersingular. It never achieves the slope sequence $(0, 1/2, 1/2, 1)$
from its reductions.
\end{proof}

\subsection{Honda's examples} We will describe some results of Honda
\cite{honda:osaka1966} and prove Theorem~\ref{11}.

Let $\ell$ be an odd prime, and $C=C_\ell$ be the smooth projective
curve over $\Q$ defined by the affine equation
\begin{equation}\label{eq:21}
  y^2=1-x^\ell.
\end{equation}
The genus $g:=g(C)$ of $C$ is $(\ell-1)/2$. The curve $C$ and its
Jacobian $J=J_\ell$ have good reduction at all primes $p\neq
2,\ell$. For a fixed odd prime $p\neq \ell$, let $\wt J$ (resp. $\wt
C$) be the reduction of $J$ (resp. $C$) at $p$ (over $\Fp$).

Let $\zeta_\ell$ be a primitive $\ell$-th root of unity in
$\Qbar\subset \C$, and $K:=\Q(\zeta_\ell)$ be the $\ell$-th cyclotomic
field. Then $K$ is a CM-field that's cyclic over $\Q$ with Galois
group $G:=\Gal(K/\Q)=(\Z/\ell\Z)^\times$, where each $a\in
(\Z/\ell\Z)^\times$ corresponds to the automorphism of $K$ that send
$\zeta_\ell$ to $\zeta_\ell^a$. Let $\rho: K\hookrightarrow \Qbar$ be
the natural inclusion. We will identify $G$ with $\Sigma_K$ via $\rho$
as in (\ref{eq:8}).

The automorphism of $C \otimes_\Q K$ defined by $(x,y)\mapsto (x,
\zeta_\ell y)$ induces an embedding
$K\hookrightarrow \End^0_{K}(J)$. This realizes $J\otimes_\Q K$ as a
CM-abelian variety of type $(K, \Phi)$, where $\Phi=\{1, 2, \cdots
g-1, g\}\subset G$ is a primitive CM-type on $K$.  Let $f$ be the
order of $p$ in $(\Z/\ell\Z)^\times$. The Artin symbol $(p, K/\Q)$
equals to $p\in (\Z/\ell\Z)^\times$, and $f$ is the order of the
decomposition group $D_p=\langle p \rangle \subseteq
(\Z/\ell\Z)^\times$ of $p$ in $K$. By \cite[Theorem 1,
2]{honda:osaka1966} or the proof of Theorem~\ref{12}, the slope
sequence of $\wt J$ depends only on $f$. Moreover, if $f$ is even,
$\wt J$ is supersingular.

Now Theorem~\ref{11} follows from Theorem~\ref{12} by noting that $J$
is an absolutely simple CM abelian variety of dimension $(\ell-1)/2$
defined over $\Q$, and the field $K$ is cyclic over $\Q$.

\section*{Acknowledgments}
We would like to thank the referee for his/her suggestion on the
exposition of the paper. (S)He also made several insightful remarks about
the results, and provided a stronger version of our main result, which
becomes Theorem~\ref{thm:main-res-improved} of the current paper. The
first named author is partially supported by the grant NSC
102-2811-M-001-090.  The second named author is partially supported by
the grants NSC 100-2628-M-001-006-MY4 and AS-98-CDA-M01.



\end{document}